\newtheorem{theorem}{Theorem}[section]
\begin{document}

\title[Generalized gometric constants for Morrey spaces]{Some Generalized Geometric Constants for Discrete Morrey Spaces}

\author[H. Rahman]{Hairur Rahman}
\address{Department of Mathematics, Islamic State University of Maulana Malik Ibrahim, Malang 65144, Indonesia}
\email{hairur@mat.uin-malang.ac.id}

\author[H.~Gunawan]{Hendra Gunawan}
\address{Faculty of Mathematics and Natural Sciences, Bandung Institute of Technology, Bandung 40132, Indonesia}
\email{hgunawan@math.itb.ac.id}

\subjclass[2010]{46B20}

\keywords{Generalized Von Neumann-Jordan constant, Zb\"{a}ganu constant, discrete Morrey spaces}

\begin{abstract}
In this paper, we calculate four geometric constants for discrete Morrey spaces.
The constants are generalized von Neumann-Jordan constant, modified von Neumann-Jordan constant,
von Neumann-Jordan type constant, and Zb\"{a}ganu constant. The four constants measure
uniformly nonsquareness of the above spaces. We obtain that the value of each of the four
constants for the above spaces is two, which means that the spaces are NOT uniformly nonsquare.
\end{abstract}

\maketitle

\section{Introduction}

Let $m\in \mathbb{Z}^d := (m_1, \cdots , m_d), N\in \omega := \mathbb{N}\cup {0}$, and write
$$
S_{m,N} :=\lbrace k\in \mathbb{Z}^d : \Vert k-m\Vert_{\infty} \leq N \rbrace,
$$
where $\Vert (m_1, \cdots ,m_d)\Vert_{\infty} := \max \lbrace \vert m_j\vert :1\leq j \leq d \rbrace$
for $(m_1,\cdots , m_d)\in \mathbb{Z}^d$. Then, for each $m\in \mathbb{Z}^d$ and $N\in \omega$,
$\vert S_{m,N}\vert = (2N+1)^d$ denotes the cardinality of $S_{m,N}$.
Let $1\leq p\leq q < \infty$, and define \textit{discrete Morrey spaces} $\ell^p_q (\mathbb{Z}^d)$
as the set of all functions (sequences) $x: \mathbb{Z}^d \to \mathbb{R},$ such that
$$
\Vert x\Vert_{\ell^p_q} := \sup_{m\in \mathbb{Z}^d, N\in \omega} \vert S_{m,N}\vert^{\frac{1}{q}-\frac{1}{p}}
\left( \sum_{k\in S_{m,N}} \vert x(k) \vert^p\right)^{\frac{1}{p}} <\infty.
$$
Note that the space $\ell^p_q$ equipped with the above norm is a Banach space \cite{Gun2}.

Gunawan {\it et al.} \cite{Gun} have computed three geometric constants, namely Von Neumann-Jordan
constant, James constant, and Dunkl-Williams constant, for these discrete Morrey spaces.
We shall here consider other geometric constants known in the literatures. Generalizing the
Von Neumann-Jordan constant for a Banach space $(X,\|\cdot\|_X)$ \cite{Clar}, Cui {\it et al.}
\cite{Cui} define the generalized of von Neumann-Jordan constant $C_{NJ}^{(s)} (X)$ by
$$
C_{NJ}^{(s)} (X) := \sup \left\lbrace \frac{\Vert x+y\Vert_X^s + \Vert x-y\Vert_X^s}{2^{s-1} (\Vert x\Vert_X^s +
\Vert y\Vert_X^s)} : x,y\in X\backslash \lbrace 0\rbrace, \forall s\in [1,\infty)\right\rbrace.
$$
Note that $1\leq C_{NJ}^{(s)} (X) \leq 2$ holds for all Banach spaces $X$. (For $s=1$, $C_{NJ}^s(X)=C_{NJ}(X),$
the Von Nuemann-Jordan constant.)

Meanwhile, Gao \cite{Gao} and Alonso {\it et al.} \cite{Alon} modify the von Neumann-Jordan constant
by defining
$$
C'_{NJ} (X) := \sup \left\lbrace \frac{\Vert x+y\Vert_X^2 + \Vert x-y\Vert_X^2}{4} : x,y\in X,
\Vert x\Vert_X = \Vert y\Vert_X = 1 \right\rbrace.
$$
Here we also have $1\leq C'_{NJ}(X) \leq C_{NJ}(X)\leq 2$ for all Banach spaces $X$.
The modified the von Neumann-Jordan constant is then generalized by Yang {\it et al.} \cite{Yang} to
the following constant:
$$
\bar{C}_{NJ}^{(s)} (X) := \sup \left\lbrace \frac{\Vert x+y\Vert_X^s + \Vert x-y\Vert_X^s}{2^s} :
x,y\in X, \Vert x\Vert_X = \Vert y\Vert_X = 1 \right\rbrace.
$$
One may observe that $\bar{C}_{NJ}^{(s)} (X)\leq C_{NJ}^{(s)} (X) \leq 2^{s-1}\left[1+
\left(2^{\frac{1}{s}}\left(C'_{NJ} (X)\right)^{\frac{1}{s}} -1\right)\right]^{s-1}$ for all Banach spaces $X$.

Another von Neumann-Jordan type constant, denoted by $C_{-\infty}$ is defined by Zuo \cite{Zuo} by
$$
C_{-\infty}(X):= \sup \left\lbrace \frac{\min\left\lbrace\Vert x+y\Vert_X^2  ,\Vert x-y\Vert_X^2\right\rbrace}
{\Vert x\Vert_X^2 +\Vert y\Vert_X^2} : x,y\in X\backslash \lbrace 0\rbrace \right\rbrace .
$$
This constant satisfies the following inequality
$$
C_{-\infty}(X) \leq C_{NJ}(X) \leq C_{J}(X),
$$
where $C_J(X)$ is James constant, given by $C_J(X):=\sup\{\min\{\|x+y\|,\|x-y\|:\|x\|=\|y\|=1\}\}$.
Recently, Zuo \cite{Zuo1} generalizes von Neumann-Jordan type constant to the following constant:
$$
C_{-\infty}^{(s)}(X):= \sup \left\lbrace \frac{\min\left\lbrace\Vert x+y\Vert_X^s  ,\Vert x-y\Vert_X^s
\right\rbrace}{2^{s-2}\left(\Vert x\Vert_X^s +\Vert y\Vert_X^s\right)} : x,y\in X\backslash \lbrace 0\rbrace \right\rbrace ,
$$
for $1\leq s<\infty$. Zuo proves that $X$ is uniformly nonsquare if only if $C_{-\infty}^{(s)} (X) <2$
for some $1\leq s<\infty$. A Banach space $X$ is {\em uniformly nonsquare} if and only if
there exists a $\delta>0$ such that
\[
\min \{\|x+y\|_X,\|x-y\|_X\} \le 2(1-\delta)
\]
for every $x,y\in X$ with $\|x\|_X=\|y\|_X=1$. Consequently, $X$ is uniformly nonsquare if and only if $C_J(X)<2$ \cite{james}.

There is also a constant called Zb\'{a}ganu constant introduced in \cite{Ganu} and studied by \cite{Lor,Lor1}:
$$
C_Z (X) := \sup \left\lbrace \frac{\Vert x+y\Vert_X  \Vert x-y\Vert_X}{\Vert x\Vert_X^2 +\Vert y\Vert_X^2} :
x,y\in X\backslash \lbrace 0\rbrace \right\rbrace.
$$
Clearly $1\leq C_Z(X) \leq C_{NJ}(X)\leq 2$ is satisfied for all normed spaces $X$. Zb\'{a}ganu constant
is generalized by \cite{Zhang}, for $1\leq s<\infty$, to the following constant:
$$C_Z^{(s)} (X):= \sup \left\lbrace \frac{\Vert x+y\Vert_X^{\frac{s}{2}}  \Vert x-y\Vert_X^{\frac{s}{2}}}{2^{s-2}
\left(\Vert x\Vert_X^s +\Vert y\Vert_X^s\right)} : x,y\in X\backslash \lbrace 0\rbrace \right\rbrace.
$$
It is obvious that $2^{2-s}\leq C_{-\infty}^{(s)}(X)\leq C_Z^{(s)}\leq C_{NJ}^{(s)} \leq 2$ for all $1\leq s<\infty$
(see \cite{Zhang}).

All of these constants, except von Neumann-Jordan type constant, have been computed for Morrey spaces and small
Morrey spaces (see \cite{Rah}). In this paper, we are interested in computing all of those constants for
discrete Morrey spaces. We present our results in the following section.

\section{Main Results}

Our results for discrete Morrey spaces are presented in the following theorem:

\bigskip

\begin{theorem}
Let $1\leq p< q<\infty$ and $1\leq s<\infty$. Then we have
$$
C_{NJ}^{(s)} (\ell^p_q) = C'_{NJ}(\ell^p_q) = \bar{C}_{NJ}^{(s)} (\ell^p_q) =
C_{-\infty} (\ell^p_q) = C_{-\infty}^{(s)}(\ell^p_q) = C_Z(\ell^p_q) = C_Z^{(s)}(\ell^p_q)= 2 .
$$
\end{theorem}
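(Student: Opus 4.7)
The plan is to exploit the fact that all seven constants are already bounded above by $2$: $C_{NJ}^{(s)}(X)\le 2$ holds in any Banach space by the general inequality recalled in the introduction, and each of $C'_{NJ}(X)$, $\bar C_{NJ}^{(s)}(X)$, $C_{-\infty}(X)$, $C_{-\infty}^{(s)}(X)$, $C_Z(X)$, $C_Z^{(s)}(X)$ is dominated there by the corresponding $C_{NJ}$ or $C_{NJ}^{(s)}$. The theorem therefore reduces to producing a single pair $x,y\in\ell^p_q(\mathbb{Z}^d)\setminus\{0\}$ satisfying $\|x\|=\|y\|$ and $\|x+y\|=\|x-y\|=2\|x\|$; substituting such a pair into each of the seven ratios---rescaling to unit vectors where the definition requires it---forces every ratio to equal $2$ exactly.

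The pair I have in mind mimics the classical two-vector construction showing $\ell^\infty$ is not uniformly nonsquare. Fix $N\in\omega$ and pick $M\in\mathbb{Z}^d$ with $\|M\|_\infty$ sufficiently large (quantified below) that the cubes $S_{0,N}$ and $S_{M,N}$ are disjoint and well-separated. Set
\[
x:=\chi_{S_{0,N}}+\chi_{S_{M,N}},\qquad y:=\chi_{S_{0,N}}-\chi_{S_{M,N}},
\]
so that $x+y=2\chi_{S_{0,N}}$ and $x-y=2\chi_{S_{M,N}}$. A direct inspection of the defining supremum yields $\|\chi_{S_{m_0,N}}\|_{\ell^p_q}=(2N+1)^{d/q}$ for every $m_0\in\mathbb{Z}^d$: the value $(2N+1)^{d/q}$ is attained on the cube itself, while the negative exponent $1/q-1/p$ rules out larger test cubes. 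Consequently $\|x+y\|=\|x-y\|=2(2N+1)^{d/q}$. Since $|x|=|y|=\chi_{S_{0,N}\cup S_{M,N}}$ we have $\|x\|=\|y\|$, and a case analysis on a test cube $S_{m,K}$ (contained in one of the small cubes, containing only one of them, or straddling both) shows that provided $\|M\|_\infty\ge(2^{q/(d(q-p))}-1)(2N+1)$, the straddling case also contributes at most $(2N+1)^{d/q}$, whence $\|x\|=\|y\|=(2N+1)^{d/q}$ as well.

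Finally, substituting the equalities $\|x\|=\|y\|=(2N+1)^{d/q}$ and $\|x\pm y\|=2\|x\|$ into each of the seven definitions (normalizing to unit vectors for $C'_{NJ}$ and $\bar C_{NJ}^{(s)}$) gives the value $2$ in every case. Combined with the upper bounds, this proves all seven equalities simultaneously. The genuinely delicate step is the straddling case in the computation of $\|x\|$: such a test cube can meet the support of $x$ in as many as $2(2N+1)^d$ points, so one must use both the forced lower bound $|S_{m,K}|\gtrsim\|M\|_\infty^d$ and the negative exponent $1/q-1/p$ coming from the strict inequality $p<q$ to conclude that this cube cannot overtake the single-cube contribution from $S_{0,N}$ itself.
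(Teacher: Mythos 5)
Your proposal is correct and follows essentially the same route as the paper: exhibit $x,y\in\ell^p_q$ supported on two far-separated pieces with $\|x\|_{\ell^p_q}=\|y\|_{\ell^p_q}$ and $\|x+y\|_{\ell^p_q}=\|x-y\|_{\ell^p_q}=2\|x\|_{\ell^p_q}$, then combine the resulting lower bound $2$ with the universal upper bounds $C^{(s)}_{-\infty}\le C^{(s)}_Z\le C^{(s)}_{NJ}\le 2$ (and their analogues). The only real difference is that the paper uses the simplest member of your family, namely the case $N=0$: two unit point masses at $0$ and $(n,0,\dots,0)$ with $n>2^{q/(d(q-p))}-1$, which reduces the norm verification to a two-term maximum and avoids your straddling-cube case analysis.
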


\begin{proof} The key to the proof is to find two elements $x,y\in \ell^p_q$ with norms equal to one,
such that $\|x+y\|_{\ell^p_q}=\|x-y\|_{\ell^p_q}=2$. Let us first consider the case where
$d=1$. For $1\leq p<q<\infty$, let $n\in \mathbb{Z}$ be an even number with $n>2^{\frac{q}{q-p}} -1$,
which can be written as
$$
(n+1)^{\frac{1}{q} - \frac{1}{p}} < 2^{-\frac{1}{p}} .
$$
Let us define the sequence $(x_k)_{k\in \mathbb{Z}}$ by
$$
x_0 = x_n =1, \text{ and } x_k =0 \text{ for all } k\not\in \lbrace 0,n \rbrace
$$
and the sequence $(y_k)_{k\in \mathbb{Z}}$ by
$$
y_0 = 1, y_n =-1, \text{ and } y_k =0 \text{ for all } k\not\in \lbrace 0,n \rbrace.
$$
Thus, we have
\begin{align*}
\Vert x\Vert_{\ell^p_q} &= \sup_{m\in \mathbb{Z}^d, N\in \omega} \vert S_{m,N}\vert^{\frac{1}{q}-\frac{1}{p}}
\left( \sum_{k\in S_{m,N}} \vert x_k \vert^p\right)^{\frac{1}{p}}\\
&= \max \left\lbrace 1, \vert S_{\frac{n}{2},\frac{n}{2}}\vert^{\frac{1}{q}-\frac{1}{p}}
\left( \sum_{k\in S_{\frac{n}{2},\frac{n}{2}}} \vert x_k \vert^p\right)^{\frac{1}{p}} \right\rbrace \\
&= \max \left\lbrace 1, (n+1)^{\frac{1}{q} - \frac{1}{p}} 2^{\frac{1}{p}} \right\rbrace.
\end{align*}
By choosing $n$ as above, we can see that
$$
(n+1)^{\frac{1}{q} - \frac{1}{p}} 2^{\frac{1}{p}} < 1.
$$
Therefore $\Vert x\Vert_{\ell^p_q} = 1$. By the same argument, we can verify that $\Vert y\Vert_{\ell^p_q} = 1$.
Furthermore, we may observe that
$$
\Vert x+y \Vert_{\ell^p_q} = 2 \text{ and } \Vert x-y\Vert_{\ell^p_q} = 2.
$$

Now we shall consider the general case where $d\geq 1$. Let $n\in \mathbb{Z}$ be an even number with
$n>2^{\frac{q}{d(q-p)}} -1$, which can be written as
$$
(n+1)^{d\left(\frac{1}{q} - \frac{1}{p}\right)} < 2^{-\frac{1}{p}} .
$$
Let us define the function $x\in \ell^p_q$ where
\begin{equation*}
x(k) :=
\begin{cases}
1, \text{if } k=(0,0,\cdots , 0), (n,0,\cdots , 0)\\
0, \text{otherwise}
\end{cases}
\end{equation*}
and $y \in \ell^p_q$ where
\begin{equation*}
y(k) :=
\begin{cases}
1, \text{if } k=(0,0,\cdots , 0)\\
-1, \text{if } k=(n,0,\cdots , 0)\\
0, \text{otherwise}
\end{cases}
\end{equation*}
Thus, we have
\begin{align*}
\Vert x\Vert_{\ell^p_q} &= \sup_{m\in \mathbb{Z}^d, N\in \omega} \vert S_{m,N}\vert^{\frac{1}{q}-\frac{1}{p}}
\left( \sum_{k\in S_{m,N}} \vert x_k \vert^p\right)^{\frac{1}{p}}\\
&= \max \left\lbrace 1, \vert S_{\frac{n}{2},\frac{n}{2}}\vert^{d\left(\frac{1}{q}-\frac{1}{p}\right)}
\left( \sum_{k\in S_{\frac{n}{2},\frac{n}{2}}} \vert x_k \vert^p\right)^{\frac{1}{p}} \right\rbrace \\
&= \max \left\lbrace 1, (n+1)^{d\left(\frac{1}{q} - \frac{1}{p}\right)} 2^{\frac{1}{p}} \right\rbrace.
\end{align*}
With the choice of $n$ as above, we have
$$
(n+1)^{d\left(\frac{1}{q} - \frac{1}{p}\right)} 2^{\frac{1}{p}} < 1.
$$
Therefore $\Vert x\Vert_{\ell^p_q} = 1$, and by the same argument $\Vert y\Vert_{\ell^p_q} = 1$.
Furthermore, we also have
$$
\Vert x+y \Vert_{\ell^p_q} = 2 \text{ and } \Vert x-y\Vert_{\ell^p_q} = 2.
$$

Now, we can calculate for generalized von Neumann-Jordan constant as follows
\begin{equation*}
C_{NJ}^{(s)} \left(\ell^p_q\right) \geq \frac{\Vert x+y\Vert^s_{\ell^p_q} + \Vert x-y\Vert^s_{\ell^p_q}}{2^{s-1}
(\Vert x\Vert^s_{\ell^p_q} +\Vert y\Vert^s_{\ell^p_q})} = \frac{2^{s+1}}{2^s} = 2.
\end{equation*}
By this observation, we find that $C_{NJ}^{(s)} \left(\ell^p_q\right) \geq 2$.
Since $C_{NJ}^{(s)} \left(\ell^p_q\right) \leq 2$, we can conclude that $C_{NJ}^{(s)} \left(\ell^p_q\right) = 2$.

For the same elements $x$ and $y$, we have
\begin{equation*}
C'_{NJ} \left(\ell^p_q\right) \geq \frac{\Vert x+y\Vert_{\ell^p_q}^2 + \Vert x-y\Vert_{\ell^p_q}^2}{4} = \frac{2^2 + 2^2}{4} = 2.
\end{equation*}
Using similiar arguments, we conclude that $C'_{NJ} \left(\ell^p_q\right) = 2$.
As for the generalized of modified von Neumann-Jordan constant, we obtain
\begin{equation*}
\bar{C}_{NJ}^{(s)} \left(\ell^p_q\right) \geq \frac{\Vert x+y\Vert_{\ell^p_q}^s + \Vert x-y\Vert_{\ell^p_q}^s}{2^s} =
\frac{2^{s+1}}{2^s} = 2.
\end{equation*}
Using the same argument as above, we conclude that $\bar{C}_{NJ}^{(s)} \left(\ell^p_q\right) = 2$.

Next, we have von Neumann-Jordan type constant with the same choices for $x$ and $y$, namely
\begin{equation*}
C_{-\infty}\left(\ell^p_q\right) = \sup \left\lbrace \frac{\min\left\lbrace\Vert x+y\Vert_{\ell^p_q}^2,
\Vert x-y\Vert_{\ell^p_q}^2\right\rbrace}{\Vert x\Vert_{\ell^p_q}^2 +\Vert y\Vert_{\ell^p_q}^2} :
x,y\in \ell^p_q\backslash \lbrace 0\rbrace \right\rbrace = 2.
\end{equation*}
Also for generalized von Neumann-Jordan type constant, we observe that
\begin{equation*}
C_{-\infty}^{(s)}\left(\ell^p_q\right)\geq  \frac{\min\left\lbrace\Vert x+y\Vert_{\ell^p_q}^s,
\Vert x-y\Vert_{\ell^p_q}^s\right\rbrace}{2^{s-2}\left(\Vert x\Vert_{\ell^p_q}^s +\Vert y\Vert_{\ell^p_q}^s\right)}
= \frac{2^s}{2^{s-1}} = 2.
\end{equation*}
Since $C_{-\infty}^{(s)}\left(\ell^p_q\right)\leq C_{NJ}^{(s)}\left(\ell^p_q\right)= 2$,
it follows that $C_{-\infty}^{(s)}\left(\ell^p_q\right)=2$.

Last, for the Zb\'{a}ganu constant and generalized Zb\'{a}ganu constant, we obtain that
\begin{equation*}
C_Z \left(\ell^p_q\right) \geq \frac{\Vert x+y\Vert_{\ell^p_q} \Vert x-y\Vert_{\ell^p_q}}{\Vert x\Vert_{\ell^p_q}^2
+\Vert y\Vert_{\ell^p_q}^2} = \frac{4}{2} = 2
\end{equation*}
and
\begin{equation*}
C_Z^{(s)} \left(\ell^p_q\right) \geq \frac{\Vert x+y\Vert_{\ell^p_q}^{\frac{s}{2}}
\Vert x-y\Vert_{\ell^p_q}^{\frac{s}{2}}}{2^{s-2}\left(\Vert x\Vert_{\ell^p_q}^s +
\Vert y\Vert_{\ell^p_q}^p\right)} = \frac{2^s}{2^{s-1}} = 2.
\end{equation*}
By the same argument, we conclude that $C_Z \left(\ell^p_q\right) = 2$ and
since $C_{Z}^{(s)}\left(\ell^p_q\right)\leq C_{NJ}^{(s)}\left(\ell^p_q\right)= 2$,
it is also clear that $C_{Z}^{(s)}\left(\ell^p_q\right)=2$.
\end{proof}

\bigskip

\noindent{\bf Acknowledgements}. This research was supported by Departement of Mathematics, Islamic 
State University of Maulana Malik Ibrahim Malang under UIN-Maliki Research Program 2020. The second 
author is supported by ITB Research and Innovation Program 2020.

\bigskip

\end{document}